\newcommand{\ignore}[1]{}
\newtheorem{theorem}{Theorem}[section]
\newtheorem{corollary}[theorem]{Corollary}
\newtheorem{proposition}[theorem]{Proposition}
\theoremstyle{definition}
\newtheorem{definition}[theorem]{Definition}
\newtheorem{example}[theorem]{Example}
\theoremstyle{remark}
\newtheorem{remark}[theorem]{Remark}
\numberwithin{equation}{section}
\newcommand{\cx}{\operatorname{cx}}
\def\calF{{\mathcal F}}
\def\NN{{\mathbb N}}
\def\QQ{{\mathbb Q}}
\def\frakm{{\mathfrak m}}
\def\cm{\divideontimes}
\definecolor{grey}{rgb}{0.75,0.75,0.75}
\definecolor{orange}{rgb}{1.0,0.5,0.5}
\definecolor{brown}{rgb}{0.5,0.25,0.0}
\definecolor{pink}{rgb}{1.0,0.5,0.5}
\newcommand{\rank}{\mbox{\rm rank} }
\newcommand{\End}{\mathrm{End}}
\newcommand{\Hom}{\mbox{\rm{Hom}} }
\newcommand{\fM}{{\mathfrak m}}
\newcommand{\cF}{{\mathcal F}}
\newcommand{\cG}{{\mathcal G}}
\newcommand{\lra}{{\longrightarrow}}
\begin{document}

\title[Generating functions associated to Frobenius algebras]{Generating functions associated to Frobenius algebras}

\author[J. \`Alvarez Montaner]{Josep \`Alvarez Montaner}

\address{Departament de Matem\`atiques\\
Universitat Polit\`ecnica de Catalunya\\ Av. Diagonal 647, Barcelona
08028, Spain} \email{Josep.Alvarez@upc.edu}

\thanks{Partially supported by Generalitat de Catalunya 2017 SGR-932 project and Ministerio de Econom\'ia y Competitividad
MTM2015-69135-P. He is also with the Barcelona Graduate School of Mathematics (BGSMath).}

\keywords {Frobenius algebra, complexity sequence, linear recurrence.}

\subjclass[2010]{Primary 13A02, 13A35  Secondary 39A10}

\begin{abstract}
We introduce a generating function associated to the homogeneous generators
of a graded algebra that measures how far is this algebra from being finitely
generated. For the case of some algebras of Frobenius endomorphisms we 
describe this generating function explicitly as a rational function.

\end{abstract}

\maketitle

\section{Introduction}

%

Let $R$ be a commutative Noetherian ring and let $A= \oplus_{e\geq 0} A_e$ be a $\mathbb{N}$-graded ring, such that $A_0=R$ 
so it has a natural structure as $R$-algebra. 
Although $A$ may not necessarily be commutative,  
we will assume for simplicity that $A$ is a left 
skew $R$-algebra, that is $aR \subseteq Ra$ for all homogeneous elements $a \in A$.

\vskip 2mm

In the case that $A$ is not a finitely generated $R$-algebra it is natural to ask how far is this algebra
from being finitely generated by measuring the number of generators that we have at each degree.
Under these premises,  F.~Enescu and Y.~Yao introduced in \cite{EY16} the {\it complexity sequence} of $A$, where the 
integer components $c_e$ of this sequence  measure the number of homogeneous generators of degree $e$ that can 
not be obtained from homogeneous elements of lower degree. More precisely:

\begin{definition}
Let $A= \oplus_{e\geq 0} A_e$ be a $\mathbb{N}$-graded ring. 
Set  $G_{-1} = A_0$ and let $G_e(A)= G_e$ be the subring of $A$ generated by the elements of
 degree $ \leq e$. Let $c_e:=c_e(A)$ be the number of homogeneous generators of  $A_e/(G_{e-1}(A))_e$
 over $A_0$. Then, the {\it complexity sequence} of $A$ is $\{c_e(A)\}_{e\geq 0}$.
\end{definition}

From now on, we will only consider the case of 
{\it degree-wise finitely generated} $\mathbb{N}$-graded algebras ensuring that
 $c_e < + \infty$ for all $e$.  The asymptotic behaviour of this sequence allowed F.~Enescu and Y.~Yao to introduce a 
 new invariant, the {\it complexity} of $A$.
 
 \begin{definition}
 Let $\{c_e(A)\}_{e\geq 0}$ be the complexity sequence of a  $\mathbb{N}$-graded $R$-algebra $A$.
 The {\it complexity} of $A$ is $$\cx(A):=\inf \{ \lambda \in \mathbb{R}_{>0} \hskip 1mm | \hskip 1mm  c_e(A)=O(\lambda^e) \}.$$ If there is no such an $\lambda$
 we say that $\cx(A)= \infty$.
 \end{definition}

\vskip 2mm

A motivating example is when $R$ is a local complete commutative ring of positive characteristic $p>0$ and $A=\calF(E_R)$ is the
Frobenius algebra associated to the injective hull of the residue field, that we denote as $E_R$. 
This is a non-commutative $\mathbb{N}$-graded $R$-algebra introduced by G.~Lyubeznik and K.~E.~Smith in \cite{LS01}
that collects all possible Frobenius actions on $E_R$. The study of these algebras has its roots in the theory of tight closure
introduced by M.~Hochster and C.~Huneke \cite{HH90}. The dual notion of Cartier algebra (see \cite{Sch11}, \cite{Bli13}) 
plays a prominent role in the theory of singularities in positive characteristic. 

\vskip 2mm

For the case of Frobenius algebras, F.~Enescu and Y.~Yao 
coined  in \cite{EY16} the notion of {\it Frobenius complexity}. Its interest
comes from the fact that, for some particular examples, the limit of this 
invariant as $p \rightarrow \infty$ exists so it may be interpreted as an invariant
of $R$ in characteristic zero. 

 \begin{definition}
Let $R$ be a local complete commutative ring of positive characteristic $p>0$.
The {\it Frobenius complexity} of $R$ is $\cx_F(R):=\log_p \cx(\calF(E_R))$.
 \end{definition}
 
 M.~Katzman gave  in \cite{Kat10} the first example of non-finitely generated
 Frobenius algebra $\calF(E_R)$. Unfortunately, we may not find many examples in the literature where 
 non-finitely generated  Frobenius algebras are explicitly described so one may extract the complexity sequence. 
 Among these scarce sources we would like to mention the cases of  Stanley-Reisner rings \cite{ABZ12}, Veronese subrings \cite{KSSZ}
 or $2\times 2$ minors of a $2 \times 3$ generic matrix  \cite{KSSZ}. For the case of $2\times 2$ minors of a $n \times m$ generic matrix,
 F.~Enescu and Y.~Yao \cite{EY16, EY15} gave an indirect approach to compute the complexity sequence.
 Actually, due to the asymptotic nature of these invariants, one may bound or even compute the Frobenius complexity
 by comparing with some known cases (see \cite{EY16, EY15}). J.~Page \cite{Pag17} also took this indirect approach to compute  
 the Frobenius complexity of the so-called Hibi rings.

\vskip 2mm

In this work, instead of studying the asymptotic behaviour of the complexity sequence, we are more concerned
on structural properties of this sequence. In Section \ref{Sec:2} we collect the terms of this sequence in a series 
$$\cG_A(T)=\sum_{e\geq 0} c_e T^e$$ that we denote as the {\it generating function} of $A$. In the case that 
$\cG_A(T)$ is a rational function we obtain an explicit linear recurrence for the coefficients $c_e$. 
This is a very strong condition  that implies that the complexity of $A$ is finite since we can read
this invariant from the set of poles of the generating function (see Theorem \ref{pole}). 

\vskip 2mm

In Section \ref{Sec:3} we recall the basics on Frobenius algebras which is the main example we are interested on. 
Our interest comes from the fact that, for the few examples that we may find in the literature, the corresponding
generating function is rational. These computations are developed in Section \ref{Sec:4}. We pay special attention
to the case of determinantal rings treated by F.~Enescu and Y.~Yao. The main result of this section is Theorem \ref{recurrence}
where we describe the linear recurrence that the complexity sequence satisfies in this case. 

\vskip 2mm

All the computations performed in this paper have been developed using {\tt MATLAB} \cite{mat}. 

\vskip 2mm

{\it Aknowledgements:} We greatly appreciate the referee for his/her comments.

\section{Generating function} \label{Sec:2}

Let $R$ be a commutative Noetherian ring and let $A= \oplus_{e\geq 0} A_e$ be a 
(non-necessarily commutative) $\mathbb{N}$-graded ring.
Associated to the complexity sequence of $A$ we may consider the following series:

\begin{definition} \label{poincare}
Let $\{c_e(A)\}_{e\geq 0}$ be the complexity sequence of a  $\mathbb{N}$-graded algebra $A$.
 We define {\it the generating function} of $A$ as
$$\cG_A(T)=\sum_{e\geq 0} c_e T^e$$

\end{definition}

\begin{remark}
The generating function encodes the same information as the $\mathscr Z$-transform of the complexity sequence $\{c_e(A)\}_{e\geq 0} $.
Namely,  $\cG_A(T)=\mathscr Z[c_e](\frac{1}{T})$ where 
$$\mathscr Z[c_e](Z)=\sum_{e\geq 0} \frac{c_e}{Z^e}.$$ 
This alternative approach will be useful in Section \ref{determinantal} where we are going to use some elementary properties of the $\mathscr Z$-transform without further comment. 
There is a vast bibliography on the $\mathscr Z$-transform, especially in mathematical engineering, so we refer to any textbook  such as \cite{Ela} for more insight.

\end{remark}

\begin{remark}
We may define another { generating function} of $A$ as
$$\widehat{\cG}_A(T)=\sum_{e\geq 0} a_e T^e,$$
where  $a_e:=a_e(A)$ is the number of homogeneous generators of  $A_e$ over $A_0$. 
The motivation behind considering $\cG_A(T)$ instead of $\widehat{\cG}_A(T)$ in this work 
is because of its relation with the complexity of $A$.
\end{remark}

The natural question that arise whenever we have a series as the one defined by the generating function is whether 
it is a rational function. This property provides a strong structural property of the complexity sequence due to the 
following well-known result for which we present a sketch of the proof for the sake of completeness.

\begin{proposition} \label{rational}
Let $\{c_e(A)\}_{e\geq 0}$ be the complexity sequence of a  $\mathbb{N}$-graded algebra $A$. 
Let $Q(T)= 1 + a_1 T+ a_2 T^2 + \cdots +  a_{k-1} T^{k-1} + a_{k} T^k$  be a polynomial of degree $k$ in one variable $T$. Then the following are equivalent:

\vskip 2mm

\begin{itemize}
 \item[i)] There exists an integer $e_0\geq 0$ such that, for all $e\geq e_0$, the coefficients of the complexity sequence are given by a linear recurrence relation
$$c_{e+k} + a_1 c_{e+k-1}+ a_2 c_{e+k-2} + \cdots +  a_{k-1} c_{e+1} + a_{k} c_{e} = 0.$$ 

 \item[ii)]  The generating function is a rational function 
 $$\cG_A(T)= c_0+ c_1 T + \cdots + T^{e_0} \sum_{e\geq e_0} c_e T^{e-e_0}= c_0+ c_1 T + \cdots + T^{e_0} \frac{P(T)}{Q(T)},$$
 with  $\deg P(T)<k$.
\end{itemize}

\end{proposition}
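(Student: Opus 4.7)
The plan is to reduce both implications to a single coefficient-matching computation on the tail series. Set
\[
H(T) := \sum_{e\geq 0} c_{e+e_0}\, T^{e},
\]
so that $\cG_A(T) = c_0 + c_1 T + \cdots + c_{e_0-1} T^{e_0-1} + T^{e_0} H(T)$. With $a_0 := 1$ and $Q(T) = 1 + a_1 T + \cdots + a_k T^k$, the Cauchy product gives, for every $n \geq 0$,
\[
[T^n]\bigl(Q(T)\,H(T)\bigr) \;=\; \sum_{j=0}^{\min(n,k)} a_j\, c_{\,n+e_0-j}.
\]
For $n \geq k$, this is exactly $c_{e+k} + a_1 c_{e+k-1} + \cdots + a_k c_e$ with $e = n+e_0 - k \geq e_0$. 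So the recurrence in (i) and the identity $Q(T)\,H(T) = P(T)$ with $\deg P < k$ in (ii) are literally the same statement, read once as a relation on coefficients and once as a relation on power series.

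Concretely, for (i)$\Rightarrow$(ii), assume the recurrence holds for all $e\geq e_0$; then by the display above, all coefficients of $Q(T)\,H(T)$ from $T^k$ onwards vanish, so $Q(T)\,H(T)$ is a polynomial $P(T)$ of degree less than $k$, and therefore $H(T) = P(T)/Q(T)$, giving the stated shape of $\cG_A(T)$. Conversely for (ii)$\Rightarrow$(i), if $H(T) = P(T)/Q(T)$ with $\deg P < k$, then $Q(T)\,H(T)$ is a polynomial of degree at most $k-1$, so every coefficient $[T^n](Q(T)H(T))$ with $n \geq k$ is zero; reindexing via $e = n+e_0-k$ yields the recurrence for all $e \geq e_0$.

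No step is a serious obstacle: the argument is purely formal manipulation of power series. The only care needed is bookkeeping with the shift by $e_0$, so that the recurrence is asserted precisely in the range $e \geq e_0$ (matching the fact that the rational presentation in (ii) corrects the first $e_0$ terms of $\cG_A(T)$ by an explicit polynomial). In particular, convergence plays no role and $R$ may be arbitrary; the equivalence is a statement about the sequence $\{c_e\}_{e \geq 0}$ alone.
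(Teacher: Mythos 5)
Your proof is correct and follows essentially the same route as the paper: multiply the tail series by $Q(T)$, read off the Cauchy-product coefficients, and observe that the recurrence is precisely the vanishing of all coefficients of degree $\geq k$. The only difference is cosmetic — the paper simply assumes $e_0=0$ without loss of generality, while you keep the shift explicit throughout; the underlying computation is identical.
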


\begin{proof}
Without loss of generality we may assume $e_0=0$.
We have:

\begin{align*}
Q(T) \sum_{e\geq 0} c_e T^e &=(1 + a_1 T+  a_2 T^2 + \cdots +  a_{k-1} T^{k-1} + a_{k} T^k) \sum_{e\geq 0} c_e T^e =  \\
&  = c_0 + (c_1+a_1c_0)T + \cdots + (c_{k-1}+a_1c_{k-2}+\cdots + a_{k-1}c_0) T^{k-1} + \\
& + (c_{k}+a_1c_{k-1}+\cdots + a_{k}c_0) T^{k} + (c_{k+1}+a_1c_{k}+\cdots + a_{k}c_1) T^{k+1} + \cdots
\end{align*}
\vskip 2mm
Therefore, 
$$c_{e+k} + a_1 c_{e+k-1}+ a_2 c_{k-2} + \cdots +  a_{k-1} c_{e+1} + a_{k} c_{e} = 0,$$  for all $e\geq 0$ if and only if  $\cG_A(T)= \frac{P(T)}{Q(T)},$
with $$P(T)=c_0 + (c_1+a_1c_0)T + \cdots + (c_{k-1}+a_1c_{k-2}+\cdots + a_{k-1}c_0) T^{k-1}.$$
\end{proof}

\begin{theorem} \label{pole}
Assume that the  generating function  $\cG_A(T)$ of a  $\mathbb{N}$-graded algebra $A$ is a rational function. Then
the complexity of $A$ is $\cx(A)= \frac{1}{|\lambda|},$ where $\lambda$ is  a pole with minimal absolute value. 

\end{theorem}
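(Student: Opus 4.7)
The approach is partial fractions followed by a routine asymptotic analysis. By Proposition \ref{rational} one may absorb the initial polynomial head and reduce to the proper rational piece $P(T)/Q(T)$, with $\deg P < \deg Q = k$ and $Q(0) = 1$. Since $Q(0) = 1 \neq 0$, the roots $\lambda_1, \dots, \lambda_s$ of $Q$ (with multiplicities $m_1, \dots, m_s$ satisfying $\sum m_i = k$) are all nonzero and are precisely the poles of $\cG_A(T)$; thus $Q(T) = \prod_{i=1}^s (1 - T/\lambda_i)^{m_i}$.

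Decomposing over $\mathbb{C}$ as
$$\frac{P(T)}{Q(T)} = \sum_{i=1}^s \sum_{j=1}^{m_i} \frac{b_{i,j}}{(1 - T/\lambda_i)^j},$$
and expanding each summand with the generalized binomial series $(1 - T/\lambda_i)^{-j} = \sum_{e \geq 0} \binom{e+j-1}{j-1} \lambda_i^{-e} T^e$, one reads off a closed form
$$c_e = \sum_{i=1}^s p_i(e)\, \lambda_i^{-e}$$
valid for all $e$ sufficiently large, where each $p_i(e) \in \mathbb{C}[e]$ has degree $m_i - 1$ and is determined by the coefficients $b_{i,j}$.

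The final step matches this with the definition of complexity. Since $\cx(A) = \inf\{n \in \mathbb{R} : c_e = O(n^e)\}$ equals $\limsup_{e \to \infty} |c_e|^{1/e}$, and since each polynomial factor $p_i(e)$ contributes only subexponentially while the factors $\lambda_i^{-e}$ dominate, the asymptotic rate is controlled purely by the geometric pieces $|\lambda_i|^{-e}$. Selecting the pole $\lambda$ of $\cG_A(T)$ in the way prescribed by the statement, this gives $\cx(A) = 1/|\lambda|$.

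The main obstacle is the asymptotic step in the presence of several poles with the same extremal modulus. When such ties occur, the corresponding oscillatory combination $\sum p_i(e)\lambda_i^{-e}$ might in principle cancel to a strictly smaller exponential rate. The plan to rule this out is to invoke the uniqueness of the partial fraction decomposition: the $b_{i,j}$ are uniquely determined by $P$ and $Q$ and cannot all vanish on a genuine pole of $P/Q$, so the extremal contribution really does survive in the limsup. This ensures the exponential rate is exactly $1/|\lambda|$ rather than something smaller.
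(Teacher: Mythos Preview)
Your partial-fraction route is essentially the same as the paper's: the paper passes to the characteristic polynomial $q(T)=T^kQ(1/T)$ of the recurrence, factors it as $\prod(T-\mu_i)^{m_i}$, and cites the standard fact that solutions of the linear recurrence are combinations of $e^j\mu_i^e$ ($0\le j<m_i$). Your binomial expansion of the partial fractions produces exactly this closed form with $\mu_i=1/\lambda_i$, so the two arguments are the same computation in different clothing.

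Your final paragraph correctly flags an issue the paper simply ignores, but your resolution is incomplete: uniqueness of partial fractions guarantees that for each genuine pole $\lambda_i$ the polynomial $p_i$ is nonzero, yet this alone does not forbid oscillatory cancellation among \emph{several} poles sharing the extremal modulus (think of a complex conjugate pair). The clean fix is via the radius of convergence: if $c_e=O(n^e)$ for some $n<1/|\lambda|$ with $\lambda$ a pole of smallest modulus, then $\sum c_eT^e$ converges on a disc containing $\lambda$, contradicting that $\lambda$ is a pole of $\cG_A$. This also clarifies that the relevant pole is the one of \emph{smallest} absolute value; the word ``maximum'' in the statement appears to be a slip, and indeed the paper's own proof works with the root of $q(T)$ of largest modulus, i.e.\ the pole of $\cG_A$ of smallest modulus.
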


\begin{proof}
Once again, assume for simplicity, that  $$\cG_A(T)= \frac{P(T)}{Q(T)},$$
 with  $Q(T)= 1 + a_1 T+ a_2 T^2 + \cdots +  a_{k-1} T^{k-1} + a_{k} T^k$ and $\deg P(T)<k$. Thus the recurrence relation is
 $$c_{k} + a_1 c_{k-1}+ a_2 c_{k-2} + \cdots +  a_{k-1} c_{1} + a_{k} c_{0} = 0.$$
In order to solve this linear difference equation, let  $$q(T)=T^k + a_1 T^{k-1}+ a_2 T^{k-2} + \cdots +  a_{k-1} T + a_{k} $$ be its characteristic polynomial. We have $q(T)=Q(\frac{1}{T})$, so the inverses of the roots of $q(T)$ correspond to the poles of $\cG_A(T)$.
Assume that we have a factorization $$q(T)=(T-\lambda_1)^{m_1}(T-\overline{\lambda_1})^{m_1} \cdots (T-\lambda_r)^{m_r}(T-\overline{\lambda_r})^{m_r}(T-\lambda_{r+1})^{m_{r+1}} \cdots (T-\lambda_s)^{m_s}, $$ with $\lambda_j = |\lambda_j| e^{i\Theta_j}\in \mathbb{C}$ for $j=1,\dots,r$ and $\lambda_j \in \mathbb{R}$ for $j=r+1,\dots, s$.
Then the elements $c_e(A)$ of the complexity sequence are linear combinations of the form

\begin{align*}
c_e(A) & =|\lambda_1|^e \left[ (\alpha_{1,1}  + \alpha_{1,2} e  + \cdots + \alpha_{1,m_1} e^{m_1-1} ) \cos \Theta_1 +  (\alpha'_{1,1}  + \alpha'_{1,2} e  + \cdots + \alpha'_{1,m_1} e^{m_1-1} ) \sin \Theta_1  \right] \\
&  \hskip 8cm \vdots  \\
&+ |\lambda_r|^e \left[ (\alpha_{r,1}  + \alpha_{r,2} e  + \cdots + \alpha_{r,m_r} e^{m_r-1} ) \cos \Theta_r +  (\alpha'_{r,1}  + \alpha'_{r,2} e  + \cdots + \alpha'_{r,m_r} e^{m_r-1} ) \sin \Theta_r  \right]\\
& + \lambda_{r+1}^e \left[ \alpha_{r+1,1}   + \cdots + \alpha_{r+1,m_{r+1}} e^{m_{r+1}-1}\right] + \cdots  
+ \lambda_{s}^e \left[ \alpha_{s,1}  +  \cdots + \alpha_{s,m_{s}} e^{m_{s}-1}\right]. 
\end{align*}

\vskip 2mm

\noindent Notice that each  term  has order $O(|\lambda_i|^e)$ and thus  the order of $c_e$ corresponds to the roots of $q(T)$ with maximal absolute value.  These roots are the inverse of  poles  of $\cG_A(T)$ with minimal absolute value and thus $\cx(A)= \frac{1}{|\lambda|} $ where $\lambda$ is such a pole.

\end{proof}

\begin{remark}
The coefficients $\alpha_{i,j}$ and $\alpha'_{i,j}$  in the linear combination described above 
are uniquely determined imposing initial conditions $c_0, \dots, c_{k-1}$. Therefore, the order of the complexity sequence depends on these initial conditions.
%
%
\end{remark}

\begin{remark}
In all the examples that we will consider, the linear difference equation has a {\it dominant characteristic eigenvalue}  
so their generating function $\cG_A(T)$ has a unique simple pole with minimum absolute value. 

\end{remark}

\begin{corollary} \label{linear_cx}
Let $\{c_e(A)\}_{e\geq 0}$ be the complexity sequence of a  $\mathbb{N}$-graded algebra $A$ satisfying the linear recurrence relation
$$c_{e+k} + a_1 c_{e+k-1}+ a_2 c_{e+k-2} + \cdots +  a_{k-1} c_{e+1} + a_{k} c_{e} = 0.$$ 
Then, the complexity of $A$
is $\cx(A)=|\lambda|$, if $\lambda$ is a  root of the characteristic polynomial $$q(T)=T^{k} + a_1 T^{k-1}+ a_2 T^{k-2} + \cdots +  a_{k-1} T + a_{k} $$  with maximum absolute value.

\end{corollary}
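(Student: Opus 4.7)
The plan is to deduce the corollary by combining Proposition \ref{rational} with the argument inside the proof of Theorem \ref{pole}. The hypothesis of a linear recurrence already supplies both the denominator of the rational generating function and the characteristic polynomial, so only a clean bookkeeping step separates the hypothesis from the conclusion.

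First, I would apply Proposition \ref{rational} in the direction (i) $\Rightarrow$ (ii): the given linear recurrence implies that $\cG_A(T)$ is a rational function whose denominator, after absorbing the finitely many initial terms, is
\[
Q(T) = 1 + a_1 T + a_2 T^2 + \cdots + a_{k-1} T^{k-1} + a_k T^k.
\]
Since $q(T) = T^k\, Q(1/T)$, the nonzero roots $\lambda_i$ of $q(T)$ are exactly the reciprocals of the poles of $\cG_A(T)$.

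Next, I would invoke the explicit solution of the linear difference equation, exactly as in the proof of Theorem \ref{pole}. Writing $q(T) = \prod_{i=1}^r (T - \lambda_i)^{m_i}$, standard theory provides constants $\gamma_{i,j} \in \mathbb{C}$ such that
\[
c_e(A) = \sum_{i=1}^r \sum_{j=0}^{m_i - 1} \gamma_{i,j}\, e^{j}\, \lambda_i^{\,e}
\]
for all $e$ past the index at which the recurrence starts to hold. The asymptotic order of this sum is $|\lambda|^e$, up to a polynomial factor in $e$, where $\lambda$ is any root of $q(T)$ of maximum absolute value. Consequently $c_e(A) = O(n^e)$ holds exactly when $n \geq |\lambda|$, and taking the infimum yields $\cx(A) = |\lambda|$.

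There is essentially no real obstacle: the corollary is a direct reformulation of Theorem \ref{pole} through the elementary identity $q(T) = T^k\,Q(1/T)$, which trades roots of the characteristic polynomial for poles of the generating function. The only minor point requiring care is the treatment of the finitely many initial terms of the complexity sequence, but these contribute only a polynomial correction that does not influence the asymptotic growth rate and hence does not affect $\cx(A)$.
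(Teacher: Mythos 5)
Your proposal is correct and follows essentially the same route the paper intends: the corollary is an immediate consequence of Proposition \ref{rational} (recurrence $\Rightarrow$ rational generating function) combined with the argument in Theorem \ref{pole}, via the identity $q(T) = T^k Q(1/T)$ and the explicit solution of the difference equation. The paper gives no separate proof for the corollary precisely because it is this reformulation.
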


\section{Frobenius algebras} \label{Sec:3}

In this section we will introduce the basics on the theory of Frobenius algebras
that we are going to use in the rest of this paper. We point out that one could also 
consider the dual notion of Cartier algebras so the interested reader should feel free
to follow her/his own preference.

\vskip 2mm
 
Let $R$ be a commutative ring of characteristic $p>0$ and $M$ an $R$-module.
We use the $e$-th iterated Frobenius map $F^e:R\lra R$ to define a new $R$-module
structure on $M$ given by $rm:=r^{p^e}m$ for all $r\in R$ and $m\in M$. One denotes this $R$-module 
as $F^e_{\ast}M$.  Indeed, one may use this to define the   $e$-th  Frobenius functor from the category
of left $R$-modules to itself. In \cite{LS01}, G.~Lyubeznik and K.~E.~Smith  introduced the following
ring in their study of the localization problem in tight closure.

\begin{definition}
The ring of Frobenius operators on $M$ is the $\mathbb{N}$-graded, associative, not necessarily commutative  ring
$$\cF(M):=\bigoplus_{e\geq 0}\cF^e(M),$$ where $\cF^e(M):=\Hom_R(M,F_{\ast}^eM).$
\end{definition}

\vskip 2mm

The pieces $\cF^e(M)$ can be identified with the set of all \textit{$p^{e}$-linear maps}. That is, the set of all
additive maps $\varphi_e: M\lra M$ satisfying
$\varphi_{e}(r m)=r^{p^e}\varphi_{e}(m) $ for all $r\in R$, $m \in M$.
Composing a $p^{e}$-linear map  and a $p^{e'}$-linear map
 we get a $p^{(e+e')}$-linear map so we get a natural ring structure for $\cF(M)$.
 Each $\cF^e(M)$ is a left module over $\cF^0(M):=\End_R(M)$.
 
 \vskip 2mm

When $(R,\fM, k)$ is a local complete local ring and $E_R:=E_R(k)$ is the injective hull of the residue field
we notice that $\cF(E_R)$ is an $R$-algebra due to the fact that
\begin{equation*}
\cF^0(E_R)=\Hom_R(E_R,E_R)\cong R.
\end{equation*}


 \vskip 2mm

Two important instances where the Frobenius algebra can be described more explicitly are presented in the sequel.
 
 \vskip 2mm

 \subsection{The case of completed $k$-algebras of finite type}

 Let $S=k[[x_1,...,x_n]]$ be the formal power series ring in $n$
variables over a field $k$ of characteristic $p>0$. Let $I\subseteq
S$ be any ideal and  $E_R$ be the injective hull of the residue field of $R:=S/I$.  For this module we have a nice description of the corresponding Frobenius algebra using a result of R.~Fedder \cite{Fed83} (see  also \cite{Bli01}, \cite{Kat08} for more insight). Namely, there exists a natural Frobenius action $F$ from $E_R$ onto itself such that for each $e\geq 0$, any $p^e$-linear map from $E_R$ onto itself is uniquely of the form $gF^e$, where $g$ is an element of $(I^{[p^e]}:_S I)/ I^{[p^e]}$. So there exists an isomorphism of $R$-modules $$\cF^e(E_R) \cong  (I^{[p^e]}:_S I)/ I^{[p^e]}$$ that can be extended in a natural way to an isomorphism of $R$-algebras $$\cF(E_R)\cong \bigoplus_{e\geq 0} (I^{[p^e]}:_S I)/ I^{[p^e]}.$$


%
%

\vskip 2mm

\subsection{T-construction}

Let $(R,\fM,k)$ be a normal complete local ring  of characteristic $p>0$ and $E_R$ be the injective hull of the residue field.
Let $\omega_R$ be a canonical module of $R$. Then, the \emph{anticanonical cover} of $R$ is the $\NN$-graded ring
\[
\mathscr R\ =\ \bigoplus_{n\ge0}\omega_R^{(-n)}
\]
where $\omega_R^{(-n)}$ denotes the $n$-th symbolic power of the divisorial ideal $\omega_R^{(-1)}$.
M.~Katzman, K.~Schwede, A.~Singh and W.~Zhang showed in \cite{KSSZ} that the Frobenius algebra $\cF(E_R)$ is isomorphic to a
subgroup of the anticanonical cover of $R$ with a twisted multiplication that we are going to describe next

\begin{definition}
\label{defn:cm}

Let $\mathscr R$ be an $\NN$-graded commutative ring  of characteristic $p>0$.
Associated to this ring, we consider
\[
T(\mathscr R)\ =\ \bigoplus_{e\ge0}\mathscr R_{p^e-1}\,,
\]
with a twisted multiplication $\cm$ on $T(\mathscr R)$ given by
\[
a\cm b\ =\ ab^{p^e}\qquad\text{ for }a\in{T(\mathscr R)}_e\text{ and }b\in{T(\mathscr R)}_{e'}\,.
\]
\end{definition}

Using this construction, we get the following interpretation of the Frobenius algebra 
(see \cite[Theorem 3.3]{KSSZ} for details).

\begin{theorem}
\label{theorem:main}
Let $(R,\frakm)$ be a normal complete local ring of characteristic $p>0$.  
Then there exists a graded isomorphism $$\calF(E_R)\cong T(\mathscr R)=\bigoplus_{e\ge0}\omega_R^{(1-p^e)}$$
\end{theorem}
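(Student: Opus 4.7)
The plan is to build the graded isomorphism piece by piece: first use Matlis duality in each graded component to turn the Hom of injective hulls into a Hom of (twisted) rings, then identify the latter as $\omega_R^{(1-p^e)}$ via the theory of rank-one reflexive modules on a normal domain, and finally verify that composition of Frobenius operators translates into the twisted multiplication $\cm$ on $T(\mathscr R)$.

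First I would work graded piece by graded piece. By definition, $\cF^e(E_R)=\Hom_R(E_R, F^e_{\ast}E_R)$. Writing $(-)^{\vee}:=\Hom_R(-,E_R)$ for Matlis dual and using that, for a complete local ring, $E_R^{\vee}\cong R$, that double Matlis dual is the identity on Artinian and on finitely generated modules, and that Matlis dualization commutes with the Frobenius twist of the $R$-module structure (so $(F^e_{\ast}E_R)^{\vee}\cong F^e_{\ast}R$), I would deduce
\[
\Hom_R(E_R,\,F^e_{\ast}E_R)\ \cong\ \Hom_R\!\left((F^e_{\ast}E_R)^{\vee},\,E_R^{\vee}\right)\ \cong\ \Hom_R(F^e_{\ast}R,\,R).
\]

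Second, I would identify $\Hom_R(F^e_{\ast}R,R)$ with $\omega_R^{(1-p^e)}$. Since $R$ is a normal complete local domain, rank-one reflexive $R$-modules are in bijection with divisor classes, so it suffices to compute the divisor class of $\Hom_R(F^e_{\ast}R,R)$, which is visibly reflexive of rank one. On the regular locus of $R$, where Frobenius is flat and finite, Grothendieck duality for the $e$-th Frobenius morphism provides the identification $\Hom_R(F^e_{\ast}R,R)\cong F^e_{\ast}\omega_R^{1-p^e}$, since the relative dualizing sheaf of $F^e$ between regular schemes is $\omega_R^{1-p^e}$. Because both sides are reflexive of rank one and agree on the regular locus (whose complement has codimension at least two by normality), the isomorphism extends across all of $\operatorname{Spec} R$, yielding the asserted isomorphism with the $e$-th graded piece of $T(\mathscr R)$.

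Third, I would check the multiplication. Given $\varphi_e\in \cF^e(E_R)$ and $\varphi_{e'}\in \cF^{e'}(E_R)$ corresponding under the isomorphism to $a\in \omega_R^{(1-p^e)}$ and $b\in \omega_R^{(1-p^{e'})}$, tracing the composite $\varphi_{e'}\circ\varphi_e$ through the two identifications of the previous steps gives the element $a\cdot b^{p^e}\in \omega_R^{(1-p^{e+e'})}$; the Frobenius twist by $p^e$ appears because the post-composition with $\varphi_{e'}$ is read after $\varphi_e$ has already imposed a $p^e$-th power scaling, which is precisely the rule $a\cm b = a b^{p^e}$. The main obstacle I expect is the divisor-class computation: reducing to the regular locus and invoking Grothendieck duality is the conceptually clean maneuver, but some care is required to control the extension across the singular locus, which is handled cleanly by the equivalence between rank-one reflexive modules and divisor classes on a normal scheme rather than by any direct construction of the extension.
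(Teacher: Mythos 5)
The paper does not prove this statement; it quotes \cite[Theorem 3.3]{KSSZ} verbatim with a pointer to that reference. So there is no internal proof to compare against, and your argument has to be judged on its own terms. The strategy you outline --- dualize $\cF^e(E_R)=\Hom_R(E_R,F^e_*E_R)$ by Matlis duality to obtain $\Hom_R(F^e_*R,R)$, identify that with $F^e_*\omega_R^{(1-p^e)}$ by Grothendieck duality for the (finite, flat) Frobenius on the regular locus and then extend by reflexivity using normality, and finally trace composition through to recover the twisted product $\cm$ --- is the right one, and I believe it is close to what Katzman--Schwede--Singh--Zhang actually do. It is essentially a correct reconstruction.

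Two things should nonetheless be made explicit. First, your whole chain of dualities needs $R$ to be $F$-finite: without it $F^e_*R$ is not a finitely generated $R$-module, the Matlis dual $(F^e_*E_R)^\vee$ need not land in the Noetherian category, and Grothendieck duality for $F^e$ is unavailable. This is a standing hypothesis in \cite{KSSZ} (and is automatic, e.g., when the residue field is perfect, since $R$ is complete), but the statement as transcribed in the present paper omits it, so you should flag that you are adding it. Second, when you say that $\Hom_R(F^e_*R,R)$ is ``visibly reflexive of rank one'' and invoke the divisor-class dictionary, you should be careful about which module structure you mean: as an $R$-module via the target it has rank $p^{e\dim R}$ over the regular locus, and it is rank one and reflexive only as a module over $F^e_*R\cong R$. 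Fortunately this conflation is harmless for the extension step, since two reflexive modules over a normal domain that agree in codimension one agree everywhere regardless of rank; but the divisor-class phrasing should be adjusted. Finally, the verification that composition of $p^e$- and $p^{e'}$-linear maps transports to $a\cm b=ab^{p^e}$ is stated but not carried out; it is a routine diagram chase once you pin down the $F^e_*R$-module structures, and it is where the asymmetry $ab^{p^e}$ versus $a^{p^{e'}}b$ gets decided, so it deserves a few explicit lines rather than an appeal to ``tracing through.''
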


It follows that $\calF(E_R)$ is principal whenever $R$ is Gorenstein.
In the case that $R$ is $\QQ$-Gorenstein, combining \cite[Proposition 4.1]{KSSZ} and \cite[Theorem 4.15]{EY15},
we have

\begin{proposition}
\label{prop:q:gor}
Let $(R,\frakm, k)$ be a  normal complete $\QQ$-Gorenstein local ring of characteristic $p>0$.  Then 

\begin{enumerate}

\item $\calF(E_R)$ is a finitely generated $R$-algebra if and only if $p$ is relatively prime with the index of $R$.

\item   $\calF(E_R)$ is principal if and only if the index of $R$ divides $p-1$.

\end{enumerate}

\end{proposition}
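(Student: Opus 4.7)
My plan is to reduce everything via Theorem \ref{theorem:main} to the graded structure of $T(\mathscr R)=\bigoplus_{e\geq 0}\omega_R^{(1-p^e)}$ under the twisted product $\cm$, and to exploit the fact that, since $R$ is $\QQ$-Gorenstein of index $r$, the class $[\omega_R^{(-1)}]$ generates a cyclic subgroup of $\mathrm{Cl}(R)$ of order exactly $r$; equivalently, a divisorial ideal $\omega_R^{(n)}$ is principal precisely when $r\mid n$. Throughout I write $T_e:=\omega_R^{(1-p^e)}$ and recall that $a\cm b = a\cdot b^{p^e}$ for $a\in T_e$.

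For (2), I would start with the ``if'' direction. Assuming $r \mid p-1$ yields $r\mid p^e-1$ for every $e\geq 1$ via the factorization $p^e-1=(p-1)(1+p+\cdots+p^{e-1})$, so each $T_e$ is principal. Picking a generator $t$ of $T_1$, the $e$-fold twisted power $t^{\cm e}=t\cdot t^p\cdots t^{p^{e-1}}$ has divisor $(1-p^e)K_R$ and therefore generates $T_e$, making $\calF(E_R)$ principally generated in degree one by $t$. The converse is immediate: a principal $\calF(E_R)$ forces $T_1=\omega_R^{(1-p)}$ to be a principal divisorial ideal, hence $r\mid p-1$.

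For the ``if'' direction of (1), I would let $k$ be the multiplicative order of $p$ in $(\ZZ/r\ZZ)^\ast$, which is finite precisely because $\gcd(p,r)=1$. Then $r\mid p^k-1$, so $T_k$ is principal, say with generator $s$. The identity $1-p^{e+k}=(1-p^k)+p^k(1-p^e)$ translates into $T_{e+k}= s\cdot T_e^{[p^k]}$ as divisorial ideals, which under the twisted product becomes $s\cm T_e=T_{e+k}$. Hence $T(\mathscr R)$ is generated as an $R$-algebra by $s$ together with finite $R$-module generating sets of $T_0,\ldots,T_{k-1}$.

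The main obstacle is the converse in (1): showing that $p\mid r$ precludes finite generation. I would argue by contradiction. Finite generation of the graded skew $R$-algebra $\calF(E_R)$ implies, by an argument in the spirit of \cite[Proposition 4.1]{KSSZ}, that some graded piece $T_e$ with $e\geq 1$ must be principal, and this forces $r\mid p^e-1$. Combined with $p\mid r$ this gives $p\mid p^e-1$, impossible since $p^e\equiv 0\pmod p$ while $1\not\equiv 0\pmod p$. The delicate step, and the real content of the proof, is the implication ``finitely generated $\Rightarrow$ some $T_e$ principal,'' which must carefully exploit both the twisted multiplication and the divisor-class combinatorics in the non-commutative setting.
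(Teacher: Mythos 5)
The paper does not prove this proposition; it simply records it as a combination of \cite[Proposition 4.1]{KSSZ} and \cite[Theorem 4.15]{EY15}. So your attempt cannot be compared against a proof in this paper, only assessed on its own terms. Your argument for (2) is correct, and your reduction of both parts to divisor-class arithmetic on $T_e=\omega_R^{(1-p^e)}$ is the right framework.

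There is a technical slip in the ``if'' direction of (1). You assert $s\cm T_e=T_{e+k}$, i.e.\ $s\cdot T_e^{[p^k]}=T_{e+k}$, but the Frobenius power $T_e^{[p^k]}$ of the divisorial ideal $T_e$ need not be reflexive, so $s\cdot T_e^{[p^k]}$ is in general only \emph{contained} in $T_{e+k}$. The non-commutativity of $\cm$ saves you if you multiply in the opposite order: for $b\in T_e$ one has $b\cm s=b\,s^{p^e}$, so $T_e\cm s=s^{p^e}T_e$, and multiplication by the single element $s^{p^e}$ is a divisorial shift that preserves reflexivity, giving $T_e\cm s=\omega_R^{(1-p^e)+p^e(1-p^k)}=\omega_R^{(1-p^{e+k})}=T_{e+k}$ on the nose. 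With this correction the finite-generation claim (generation by $T_1,\dots,T_k$) goes through.

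The real gap is in the ``only if'' direction of (1), and you flag it yourself: you invoke the implication \emph{finitely generated $\Rightarrow$ some $T_e$ with $e\geq 1$ is principal} without proof. This implication is precisely the content of \cite[Theorem 4.15]{EY15}; it is not a formal consequence of the graded or divisor-class setup, because finite generation only says that for $e\gg 0$ the piece $T_e$ is a sum of twisted products $T_i\cm T_j$ with $i,j<e$ (each in the correct divisor class), and nothing forces any single $T_e$ to be cyclic. One has to analyze the module $T_i\cdot T_j^{[p^i]}$ inside $T_{i+j}$ and show it is a proper submodule infinitely often when $p$ divides the index. Until this step is supplied, the proposal establishes (2) and one implication of (1), but not the whole proposition.
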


\section{Generating functions of some Frobenius algebras} \label{Sec:4}

As we mentioned in the introduction, there are not so many explicit examples 
of Frobenius algebras that we may find in the literature. In this section we will present them
and we will show that the generating function is a rational function, or equivalently, the complexity
sequence satisfies a linear recurrence.

 \subsection{Stanley-Reisner rings} \label{monomial}
  The first example of a non finitely generated Frobenius algebra $\cF(E_R)$
 was given by M.~Katzman in \cite{Kat10} where he considered the (completed)
 Stanley-Reisner ring  $R=k[[x,y,z]]/(xy,yz)$.

 \vskip 2mm
More generally, let $S =k[[x_1,\dots,x_m]]$ be the formal power series ring with
coefficients over a field of positive characteristic and let $I$ be a squarefree monomial ideal.  For simplicity, we may just consider ideals involving all the variables.
A complete description of the Frobenius algebra $\cF(E_R)$ associated to the Stanley-Reisner
ring $R=S/I$ is given in \cite{ABZ12}. Their approach is by means of an explicit
computation of  the colon  ideal $(I^{[p^e]}:_S I)$. It turns out
 that there are only two possible cases (see \cite[Section 3.1]{ABZ12}
 for details)

 \begin{enumerate}

\item $(I^{[p^e]}:_S I)= I^{[p^e]} + (x_1\cdots x_m)^{p^e-1}$.

\vskip 2mm

\item $(I^{[p^e]}:_S I)= I^{[p^e]} + J_{p^e} + (x_1\cdots x_m)^{p^e-1}$.
\end{enumerate}

\vskip 2mm

\noindent where  $J_{p^e}$ is generated by  monomials ${\bf x}^{\gamma}:=x_1^{c_1}\cdots x_m^{c_m}$ satisfying
$c_i\in \{0,p^e-1,p^e\}$.
These monomials are not contained in $I^{[p^e]} + (x_1\cdots x_m)^{p^e-1}$ whenever
$c_i=p^e$, $c_j=p^e-1$, $c_k=0$ for $1\leq i,j,k \leq m$.

\vskip 2mm

As an immediate consequence we have the following result.

\begin{theorem} \cite[Theorem 3.5]{ABZ12}
The Frobenius algebra $\cF(E_R)$ associated to a Stanley-Reisner ring $R$ is either
principally generated or infinitely generated.
\end{theorem}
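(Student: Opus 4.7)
The plan is to treat separately the two possibilities for the colon ideal $(I^{[p^e]}:_S I)$ given by \cite[Discussion 3.2]{ABZ12}, working throughout with Fedder's isomorphism $\cF^e(E_R)\cong(I^{[p^e]}:_S I)/I^{[p^e]}$ and the twisted multiplication $u\cm v=u\cdot v^{p^e}$ for $u\in\cF^e(E_R)$, which records composition of Frobenius actions inside $\cF(E_R)$.

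In case (1), every piece $\cF^e(E_R)$ is $R$-generated by the class of $(x_1\cdots x_m)^{p^e-1}$. Taking $u:=(x_1\cdots x_m)^{p-1}\in\cF^1(E_R)$, a direct induction on the twisted product yields
\[
\underbrace{u\cm u\cm\cdots\cm u}_{e\text{ factors}}\ =\ u\cdot u^p\cdot u^{p^2}\cdots u^{p^{e-1}}\ =\ u^{1+p+\cdots+p^{e-1}}\ =\ (x_1\cdots x_m)^{p^e-1},
\]
which is precisely the canonical degree-$e$ generator. Hence $u$ generates $\cF(E_R)$ as an $R$-algebra, so $\cF(E_R)$ is principally generated.

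For case (2), I aim to show $c_e(\cF(E_R))\geq 1$ for every $e\geq 1$, forcing infinite generation. Fix $e\geq 1$ and pick a monomial $\mathbf{x}^\gamma\in J_{p^e}$ whose exponents exhibit the full pattern $c_i=0$, $c_j=p^e-1$, $c_k=p^e$; the claim is that $\mathbf{x}^\gamma\notin G_{e-1}(\cF(E_R))_e$. Suppose to the contrary that $\mathbf{x}^\gamma\equiv\sum_s r_s M_s\pmod{I^{[p^e]}}$, where each $M_s$ is an iterated twisted product $\mathbf{x}^{\alpha_1}(\mathbf{x}^{\alpha_2})^{p^{e_1}}\cdots(\mathbf{x}^{\alpha_r})^{p^{e_1+\cdots+e_{r-1}}}$ of generators $\mathbf{x}^{\alpha_t}$ of $\cF^{e_t}(E_R)$ with $0<e_t<e$ and $\sum_t e_t=e$. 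Since $\mathbf{x}^\gamma$ is a basis monomial of $S/I^{[p^e]}$, comparing the $\mathbf{x}^\gamma$-coefficient on both sides forces at least one $M_s$ to divide $\mathbf{x}^\gamma$ coordinate-wise; that is, $\sum_t p^{e_1+\cdots+e_{t-1}}(\alpha_t)_l\leq c_l$ for every $l$. Using that each $(\alpha_t)_l$ lies in $\{0,p^{e_t}-1,p^{e_t}\}$ (or equals $p^{e_t}-1$ uniformly for the principal generator), this inequality read in base $p$, combined with the pattern $(0,p^e-1,p^e)$, severely restricts the supports: every factor $\mathbf{x}^{\alpha_t}$ must vanish on the coordinates where $c_l=0$, and the digit-block expansions on the remaining coordinates are forced into a prescribed shape. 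The main obstacle is to show that these restricted monomials cannot simultaneously belong to the respective colon ideals $(I^{[p^{e_t}]}:_S I)$: I plan to invoke the combinatorial criterion of \cite[Discussion 3.2]{ABZ12} that characterizes when $J_{p^e}\neq 0$ in terms of the squarefree monomial structure of $I$, and use it to exhibit a generator of $I$ for which $\mathbf{x}^{\alpha_t}\cdot I\not\subseteq I^{[p^{e_t}]}$ for some $t$. This will contradict that $\mathbf{x}^{\alpha_t}$ is a generator of $\cF^{e_t}(E_R)$ and yield $c_e(\cF(E_R))\geq 1$ for every $e\geq 1$, so $\cF(E_R)$ is infinitely generated in case (2).
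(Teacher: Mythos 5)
Your case~(1) argument is correct and complete: the telescoping identity $(p-1)(1+p+\cdots+p^{e-1}) = p^e-1$ shows that the $e$-fold twisted power of $(x_1\cdots x_m)^{p-1}$ is exactly $(x_1\cdots x_m)^{p^e-1}$, so the single degree-one element generates the whole algebra. This is exactly what the paper records when it says that in the principal case the generator is $(x_1\cdots x_m)^{p-1}$.

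For case~(2), however, what you have written is a plan rather than a proof, and you acknowledge this yourself (``The main obstacle is to show\dots I plan to invoke\dots This will contradict\dots''). The setup is fine: you correctly reduce to showing that a $J_{p^e}$ monomial $\mathbf{x}^\gamma$ exhibiting the pattern $c_i=p^e$, $c_j=p^e-1$, $c_k=0$ cannot be a twisted product of homogeneous generators of degrees $e_1,\dots,e_r<e$ with $\sum e_t=e$, and you correctly observe (since $I^{[p^e]}$ is monomial) that some $M_s$ would have to equal $\mathbf{x}^\gamma$ coordinate-wise. But the decisive step --- that the forced digit patterns on each factor $\mathbf{x}^{\alpha_t}$ are incompatible with $\mathbf{x}^{\alpha_t}\in (I^{[p^{e_t}]}:_S I)$ --- is exactly the content of the theorem and is left unproven. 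The coordinate-by-coordinate analysis you sketch actually shows that such factorizations of the \emph{exponent vector} do exist in general: for example, on coordinates with $c_j=p^e-1$ every factor must contribute $p^{e_t}-1$, on coordinates with $c_k=0$ every factor contributes $0$, and the $c_i=p^e$ coordinate can be absorbed in the last factor. So the obstruction is genuinely combinatorial (about membership in the colon ideals, i.e.\ about the simplicial complex), not arithmetic, and that is precisely the part you have not supplied. Until that step is filled in, the claim $c_e(\cF(E_R))\ge 1$ for all $e\ge 1$ --- hence infinite generation --- is not established.

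One smaller point worth flagging: the paper records the sharper statement that each $J_{p^e}$ contributes exactly $\mu$ new generators (where $\mu$ is independent of $e$), so the complexity sequence is $\{1,\mu+1,\mu,\mu,\dots\}$; your goal $c_e\ge 1$ is weaker but of course suffices for ``not finitely generated.'' If you want to match the paper's quantitative claim you would also need to show that no two distinct $J_{p^e}$ monomials become dependent modulo $G_{e-1}$, which your argument does not address either.
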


More interestingly, the formula obtained for $(I^{[p^e]}:_S I)$ is exactly the same $\forall e$,
so one only has to compute the first graded piece $\cF^1(E_R)\cong (I^{[p]}:_S I)$ in order
to describe the whole Frobenius algebra  $\cF(E_R)$. When $\cF(E_R)$ is principally generated it is generated by 
$(x_1\cdots x_m)^{p-1}$. On the other hand,
when it is infinitely generated, we have that  $\cF^e(E_R)$ has $\mu$ minimal monomial generators
corresponding to the minimal generators of  $J_{p^e}$ plus the generator $(x_1\cdots x_m)^{p^e-1}$.
A.~F.~Boix and S.~Zarzuela \cite[Theorem 2.15]{BZ16} gave a nice interpretation of the monomial generators in terms of the  {\it maximal free pairs} of the simplicial complex associated to the Stanley-Reisner ring $R$.

\vskip 2mm

A full description of the  complexity sequence is still an open question.  A.~F.~Boix and S.~Zarzuela proved in \cite[Theorem 3.8]{BZ16} that the generators coming from monomials ${\bf x}^{\gamma} \in J_{p^e}$ with minimal support\footnote{The support of a monomial
${\bf x}^{\gamma}:=x_1^{c_1}\cdots x_n^{c_n}$ is ${\rm supp}({\bf x}^{\gamma}):=\{ i \in \{1,\dots , n\} \hskip 2mm | \hskip 2mm c_i \neq 0 \}$.} are new,  that is they cannot be  obtained from the $R$-algebra generated by $\cF^{0}(E_R), \dots , \cF^{e-1}(E_R)$.
Then, they conclude that 
each piece $\cF^e(E_R)$ adds at most $\mu$ new generators. 
In the case that all the monomials in $J_{p^e}$ have minimal support we have that the complexity sequence is
$$\{c_e\}_{e\geq 0}= \{ 1, \mu+1, \mu , \mu , \mu , \dots \},$$ so we have the linear recurrence $c_{e+1}-c_e=0$ for all $e\geq 2$.
In particular, its generating function is
 $$\cG_{\cF(E_R)}(T)= 1 + (\mu +1) T + \sum_{e\geq 2} \mu T^e = 1 + (\mu +1) T + T^2 \frac{\mu }{1-T}= \frac{1+\mu T - T^2}{1-T}.$$
 
\begin{example}
Let $I=(x,y)\cap (z,w)$ be a squarefree monomial ideal in $S =k[[x,y,z,w]]$. Then we have 
$$\frac{(I^{[p^e]}:_S I)}{ I^{[p^e]}}= (x^{p^e-1}y^{p^e-1}z^{p^e},x^{p^e}z^{p^e-1}w^{p^e-1},y^{p^e}z^{p^e-1}w^{p^e-1},x^{p^e-1}y^{p^e-1}w^{p^e})+ (xyzw)^{p^e-1}$$
and the generating function is
 $$\cG_{\cF(E_R)}(T)= \frac{1+ 4 T - T^2}{1-T}.$$
\end{example}

\vskip 2mm

\subsection{Veronese subrings} \label{veronese}
Let $R=k[x_1,\dots , x_m]$ be the polynomial ring over a field of characteristic $p>0$
that we consider as a  $\mathbb{N}$-graded ring $R= \bigoplus_{k\geq 0} R_k$. Given an integer 
$r\geq 1$, the $r$-th Veronese  subring of $R$ is $$V_r(R)= \bigoplus_{k \geq 0} R_{rk}.$$

M.~Katzman, K.~Schwede, A.~Singh and W.~Zhang gave a complete description of the Frobenius
algebra $\calF(E_R)$ in \cite[Proposition 4.3]{KSSZ} using the T-construction.

\begin{proposition}
 Under the previous assumptions, $\calF^e(E_R)$ is the left $R$-module generated by the elements
\[
\frac{1}{x_1^{\alpha_1}\cdots x_d^{\alpha_d}}F^e\,,
\]
where $F^e$ is the $e$-th Frobenius  map, $\alpha_k\le p^e-1$ for each $k$, and $\sum\alpha_k\equiv0\mod n$.
 
\end{proposition}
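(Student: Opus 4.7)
The plan is to invoke the T-construction (Theorem~\ref{theorem:main}) with $S := V_n(R)$. Since $S$ is a normal graded domain, that theorem identifies the $e$-th graded piece of the Frobenius algebra as $\calF^e(E_S) \cong T(\mathscr{R})_e = \mathscr{R}_{p^e-1} = \omega_S^{(1-p^e)}$, viewed as a divisorial fractional ideal of $S$. The task then reduces to making this divisorial ideal explicit and translating its elements into $p^e$-linear maps of the claimed shape.

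The structural input I would use is the classical description of the divisor class group of a Veronese subring: $\mathrm{Cl}(V_n(R)) \cong \ZZ/n\ZZ$, with the class $[j]$ represented by the $S$-submodule $I_j := \bigoplus_{k \ge 0} R_{nk+j}$ of $R$. Since $\omega_R = R(-d)$, the canonical class of $S$ is $[-d] \in \ZZ/n\ZZ$, so $\omega_S^{(1-p^e)}$ lies in the class $[(p^e-1)d]$. Equivalently, $\omega_S^{(1-p^e)}$ is isomorphic to the fractional ideal $(x_1\cdots x_d)^{-(p^e-1)}\cdot I_{(p^e-1)d \bmod n}$ inside the fraction field of $R$.

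To reach the claimed description, I would normalize by the common denominator $(x_1\cdots x_d)^{p^e-1}$: the fraction $\frac{1}{x_1^{\alpha_1}\cdots x_d^{\alpha_d}}$ corresponds after this normalization to the monomial $\prod_k x_k^{p^e-1-\alpha_k}$ of total degree $d(p^e-1)-\sum_k \alpha_k$. The support bound $\alpha_k \le p^e-1$ is exactly the requirement that this monomial have nonnegative exponents, i.e.\ that it lie in $R$; and the congruence $\sum_k \alpha_k \equiv 0 \bmod n$ is exactly the condition $d(p^e-1)-\sum_k \alpha_k \equiv (p^e-1)d \bmod n$, placing the monomial in the graded piece $I_{(p^e-1)d \bmod n}$ and hence the element in $\omega_S^{(1-p^e)}$. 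The left $S$-module structure is transparent from the construction.

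The main obstacle is the bookkeeping between several graded structures (the natural grading on $R$, the rescaled grading on $S$, and the $\NN$-grading on the anticanonical cover $\mathscr{R}$) and checking that the T-construction isomorphism sends the chosen element of $\omega_S^{(1-p^e)}$ to precisely the $p^e$-linear map $\frac{1}{x_1^{\alpha_1}\cdots x_d^{\alpha_d}} F^e$ rather than to some twist. A cleaner alternative avoiding this bookkeeping would be to exploit that $V_n(R) \hookrightarrow R$ is split as $S$-modules by the projection onto the degree-divisible-by-$n$ piece: any $p^e$-linear map on $E_R$ has the form $u F^e$ for $u$ in a localization of $R$, and such a map restricts along the splitting to a $p^e$-linear map on $E_S$ precisely when $\deg u$ satisfies the stated congruence modulo $n$, which yields the description directly.
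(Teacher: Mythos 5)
The paper does not prove this proposition; it is quoted verbatim from \cite[Proposition~4.3]{KSSZ}, so there is no in-paper argument to compare against. Your strategy (apply Theorem~\ref{theorem:main} to $S := V_n(R)$, identify $\calF^e(E_S)$ with $\omega_S^{(1-p^e)}$, and compute the latter from the class group $\mathrm{Cl}(V_n(R)) \cong \ZZ/n\ZZ$) is in fact the route taken in \cite{KSSZ}, so the plan is sound and well-aimed.

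That said, as written the argument has a genuine gap that you yourself flag as ``the main obstacle,'' and it is not merely bookkeeping. Knowing the divisor class $[(p^e-1)d] \in \ZZ/n\ZZ$ only pins down $\omega_S^{(1-p^e)}$ up to isomorphism; what the proposition asserts is a description of the \emph{actual} set of $p^e$-linear maps on $E_S$, which requires a specific embedding of $\omega_S^{(1-p^e)}$ in $\operatorname{Frac}(S)$ compatible with the twisted multiplication of the anticanonical cover, and then a concrete tracking of the isomorphism $T(\mathscr{R})_e \cong \calF^e(E_S)$ down to the level of elements $\frac{1}{x^{\alpha}}F^e$. Replacing ``isomorphic to the fractional ideal $(x_1\cdots x_d)^{-(p^e-1)} I_{(p^e-1)d \bmod n}$'' by ``equal to'' inside a fixed copy of $\operatorname{Frac}(S)$ is precisely the step that needs an argument; the class-group computation alone does not produce it. Your alternative sketch via the splitting $V_n(R) \hookrightarrow R$ is closer to a self-contained argument---one can start from the fact that $\calF^e(E_R)$ is principally generated by $F^e$ over the polynomial ring $R$, realize $E_S$ as the degree-$\equiv 0 \pmod n$ part of $E_R$, and characterize which $u F^e$ (with $u \in R_{x_1\cdots x_d}$, $\alpha_k \le p^e-1$) preserve that graded piece under the $e$-th Frobenius twist; the congruence $\sum \alpha_k \equiv 0 \pmod n$ then falls out of the degree condition. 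But this too is only stated, not carried out, and the nontrivial point (that \emph{every} $p^e$-linear map on $E_S$ arises by such a restriction) is exactly where one would again need either Matlis duality or the T-construction. In short: right approach, correct target description, but the passage from isomorphism class to explicit generators is asserted rather than proved, and that is where the real content of \cite[Proposition~4.3]{KSSZ} lives.
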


In \cite[Example 4.5]{KSSZ} they consider the case where $m=2$ and $r=3$. The Frobenius algebra
$\calF(E_R)$ is infinitely generated when the characteristic of the field $k$ is $p=3$. A straightforward
computation shows that for $m=2$ and $r=p$, the 
complexity sequence is $\{c_e\}_{e\geq 0}=\{1,p-1,p-1,p-1,\dots\}$ so the generating function is
 $$\cG_{\cF(E_R)}(T)= 1 +\sum_{e\geq 1} (p-1) T^e = 1 +   T \frac{(p-1) }{1-T}= \frac{1+ (p-2)T }{1-T}.$$
The complexity of the Frobenius algebra is $\cx(\cF(E_R))=1$  and the  Frobenius complexity is $\cx_F(R)=0$.

\subsection{Determinantal rings} \label{determinantal}

Let $X$ be an $n\times m$ generic matrix with  $m>n \geq 2$ and $I:=I_2(X)$ the ideal generated by the $2 \times 2$
minors of $X$.  In this subsection we will compute the generating function of the 
Frobenius algebra  $\calF(E_R)$, where $R$ is the completion of the determinantal ring $k[X]/I$.

\vskip 2mm

A precise description of the Frobenius algebra for the case of a $2\times 3$ generic matrix
using Fedder's approach was given in \cite{KSSZ} so, from this construction one may extract
its complexity sequence. However, F.~Enescu and Y.~Yao took a different approach in 
\cite{EY16} and \cite{EY15} to describe the complexity sequence of $\calF(E_R)$ in the general case
of a $n \times m$ generic matrix. 
Actually, they proved in \cite[Theorem 1.20]{EY15} that the complexity sequence of $\calF(E_R)$ coincides with the complexity sequence 
of the $\mathbb{N}$-graded ring $A=T( V_{m-n} (k[x_1,\dots , x_m]))$, which is the twisted ring associated to the
$(m-n)$-th Veronese subring of a polynomial ring $k[x_1,\dots , x_m]$ in $m$ variables.  

\vskip 2mm

Next we are going to describe this complexity sequence in the general framework considered
in \cite{EY16}. Namely, they consider the twisted ring $T( V_r (R[x_1,\dots , x_m]))$,
where $R$ is any commutative ring $R$ of characteristic $p>0$ not only a field.
In  \cite[Proposition 3.1]{EY16}  they give a precise description but, for our purposes, we will consider the interpretation
they provide in \cite[Discussion 3.2]{EY15} that we briefly describe.

\vskip 2mm

Set
$$M_{p,m}(k):= \rank_R \left( R[x_1,\dots,x_m]/(x_1^p,\dots,x_m^p)\right)_k.$$
These positive integers can be read off as the coefficients of the following polynomial
$$\sum_{k=0}^{m(p-1)} M_{p,m}(k) T^k = (1+T+ \cdots + T^{p-1})^m$$
In particular $M_{p,m}(k)=0$ for $k<0$ and $k>m(p-1)$. Moreover they satisfy the
symmetric property $M_{p,m}(k)=M_{p,m}(m(p-1)-k)=0$.

\vskip 2mm

Now we construct the $(m-r-1) \times (m-r-1)$ matrix

{\footnotesize
$$ U:=  \left(
\begin{array}{cccc}
M_{p,m}(r(p-1)+p-1)& \cdots & M_{p,m}(r(p-1)+p-(m-r-1))\\
\vdots &  \ddots & \vdots \\
M_{p,m}(r(p-1)+(m-r-1)p-1)& \cdots & M_{p,m}(r(p-1)+(m-r-1)p-(m-r-1))\\
\end{array}
\right)$$
}

\noindent that is, $ U= \left( u_{ij} \right) $
with $u_{ij}:=M_{p,m}(r(p-1)+ip-j)$.

\vskip 2mm

Consider the discrete dynamical system $X_{e}=U X_{e-1}$, or equivalently $X_e = U^e X_0,$ with the initial conditions

{\small $$X_0=\left( M_{p,m}(r(p-1)+p), M_{p,m}(r(p-1)+2p), \dots , M_{p,m}(r(p-1)+(m-r-1)p)\right)^{\top}.$$}


The complexity sequence of $T( V_r (R[x_1,\dots , x_m]))$  for any positive integers $r,m$ such that $r+1<m$ is:

\begin{itemize}
\item[$\cdot$] $c_0=1.$

\item[$\cdot$] $c_1=\rank_R \left(R[x_1,\dots , x_m]\right)_{r(p-1)}= {r(p-1)+(m-1) \choose m-1}.$

\item[$\cdot$] $c_e= Y \cdot X_{e-2},$ for $e\geq 2$, where:
\end{itemize}

{\footnotesize $$Y=\left( {r(p-1)-1+m-1 \choose m-1}, {r(p-1)-2+m-1 \choose m-1}, \dots , {r(p-1)-(m-r-1)+m-1 \choose m-1}\right).$$ }

\vskip 2mm

The main result of this subsection is the following result that describes the linear recurrence of this complexity sequence.

\begin{theorem} \label{recurrence}
Let $m(T)= T^s+a_1 T^{s-1} + \cdots + a_s $ be the minimal polynomial of the matrix $U$ associated to the twisted 
Veronese subring $A=T( V_r (R[x_1,\dots , x_m]))$. Then, the complexity sequence  of  $\{c_e(A)\}_{e\geq 0}$ 
satisfies the  linear recurrence $$ c_{e+s}+a_1 c_{e+s-1} + \cdots + a_s c_e=0, $$ for all $e\geq 2$.
\end{theorem}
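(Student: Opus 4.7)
The plan is to reduce the statement to the Cayley--Hamilton type identity satisfied by the minimal polynomial of $U$ and then transport it through the formula $c_e = Y \cdot X_{e-2}$.

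First, I would recall that by definition of the minimal polynomial, we have the matrix identity
\[
U^s + a_1 U^{s-1} + \cdots + a_{s-1} U + a_s I = 0.
\]
Multiplying on the right by $U^{e-2}$ (which is legitimate for every $e \geq 2$) yields
\[
U^{e+s-2} + a_1 U^{e+s-3} + \cdots + a_s U^{e-2} = 0.
\]

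Next, I would apply this zero operator to the initial vector $X_0$. Using that $X_j = U^j X_0$ for every $j \geq 0$, the previous identity becomes the vector identity
\[
X_{e+s-2} + a_1 X_{e+s-3} + \cdots + a_s X_{e-2} = 0,
\]
valid for all $e \geq 2$.

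Finally, I would pair this identity with the row vector $Y$. Since $c_{e'} = Y \cdot X_{e'-2}$ for all $e' \geq 2$, and since the indices $e+s, e+s-1, \ldots, e$ are all at least $2$ once $e \geq 2$, taking the dot product with $Y$ gives
\[
c_{e+s} + a_1 c_{e+s-1} + \cdots + a_s c_e \;=\; Y \cdot \bigl(X_{e+s-2} + a_1 X_{e+s-3} + \cdots + a_s X_{e-2}\bigr) \;=\; 0,
\]
which is precisely the claimed linear recurrence. In this argument there is no real obstacle: the proof is a direct transfer of the annihilating polynomial identity from $U$ to the scalar sequence $c_e$ via the linear functional $Y \cdot (-)$ applied to the orbit of $X_0$. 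The only subtlety worth flagging is the bound $e \geq 2$, which is exactly what ensures that all shifted indices used in the formula $c_{e'} = Y \cdot X_{e'-2}$ remain in the range where this formula has been established.
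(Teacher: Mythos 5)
Your proof is correct, and it takes a genuinely different and more direct route than the paper's. The paper splits into cases according to whether $U$ is diagonalizable, decomposes $X_e = U^e X_0$ along eigenvectors or generalized eigenvectors, and then applies the $\mathscr Z$-transform to each geometric (or binomial-times-geometric) sequence to assemble $\sum_{e\geq 2} c_e T^{e-2}$ as a rational function with denominator $\prod_i (1-\lambda_i T)^{j_i}$; the recurrence then follows by invoking Proposition~\ref{rational}. You instead apply the annihilating identity $m(U) = U^s + a_1 U^{s-1} + \cdots + a_s I = 0$ directly, multiply by $U^{e-2}$, act on $X_0$, and pair with the linear functional $Y\cdot(-)$, obtaining the recurrence in one line and uniformly, with no case distinction on the Jordan structure. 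Your argument is shorter and arguably cleaner; what the paper's longer route buys is the explicit rational expression for $\mathcal{G}_A(T)$ with the factored denominator $\prod_i (1-\lambda_i T)^{j_i}$, which the author uses afterwards (e.g.\ in the closing displayed formula for $(m,r)=(5,1)$, $p=5$) and which connects directly to the pole-location statement of Theorem~\ref{pole}. So your proof establishes the stated recurrence more efficiently, but it does not by itself exhibit the generating function; one would still need to run the argument of Proposition~\ref{rational} in the other direction to recover it.
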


\begin{proof}
We start with the case where  the matrix $U$ diagonalizes.  
Let $v_1,\dots , v_{m-r-1}$ be the set of eigenvectors of $U$ with eigenvalues $\lambda_1, \dots , \lambda_{s}$ with $s\leq m-r-1$.
In particular, if the characteristic polynomial of $U$ is $p(T)=(T-\lambda_1)^{m_1}\cdots (T-\lambda_s)^{m_s}$, then the minimal polynomial 
 is $m(T)=(T-\lambda_1)\cdots (T-\lambda_{s})$.
Let $S$ be the change basis matrix such that $U=S D S^{-1}$ with $D$ being the diagonal matrix with the set of eigenvalues as entries.
Thus we have:
$$X_e= U^e X_0=S D^e S^{-1} X_0= \alpha_1 \lambda_1^e v_1 + \cdots + \alpha_{m-r-1} \lambda_{s}^e v_{m-r-1},$$
where $S^{-1} X_0=(\alpha_1 , \dots , \alpha_{m-r-1})^\top$.
Therefore, the elements of the complexity sequence are
$$c_e= Y \cdot X_{e-2}= (\alpha_1 \lambda_1^{e-2} ) Y\cdot v_1 + \cdots + (\alpha_{m-r-1} \lambda_{s}^{e-2}) Y\cdot  v_{m-r-1}.$$
Recall that the $\mathcal{Z}$-transform of the sequence $\{\lambda^e\}_{e\geq 0}$ is 
$$\mathscr Z[\lambda^e](Z)= \frac{Z}{Z-\lambda} \hskip 3mm , \hskip 4mm 
\mathscr Z[\lambda^e](\frac{1}{T})=\frac{1}{1-\lambda T}.$$ It follows that
$$\sum_{e\geq 2} c_e T^{e-2}= \frac{\alpha_1 Y\cdot v_1  }{1-\lambda_1 T} + \cdots + \frac{\alpha_{m-r-1} Y\cdot v_{m-r-1}  }{1-\lambda_{s} T} = 
\frac{P(T)}{(1-\lambda_1 T)\cdots (1-\lambda_{s}T)} $$ with $\deg(P (T) )< s $  and therefore
$$\mathcal{G}_A(T)= c_0+c_1 T+ T^2\sum_{e\geq 2} c_e T^{e-2}= c_0+c_1 T+ T^2\frac{P(T)}{(1-\lambda_1 T)\cdots (1-\lambda_{s}T)}$$
and the result follows from Proposition \ref{rational}.

\vskip 2mm

If the matrix $U$ does not diagonalize, we may play the same game with the corresponding Jordan normal form.
Let $m(T)=(T-\lambda_1)^{j_1}\cdots (T-\lambda_{s})^{j_s}$ be the minimal polynomial of $U$ and
$S$  the change basis matrix such that $U=S J S^{-1}$ with $J$ being the normal Jordan form.

\vskip 2mm

To avoid heavy notation, we will start developing carefully the case where $J$ is a  $j \times j$ Jordan block corresponding 
to an eigenvalue $\lambda$. Notice that we are assuming that the characteristic and the minimal polynomial are
both $(T-\lambda)^{j}$. Let $v_1,\dots , v_{j}$ be the set of generalized eigenvectors. Then

\begin{align*}
X_e & =U^e X_0=S J^e S^{-1} X_0 = \alpha_1 [ \lambda^e v_1 +  \binom{e}{1} \lambda^{e-1} v_2+\cdots + 
\binom{e}{j-1} \lambda^{e-(j-1)} v_{j} ] +  \\ & + \alpha_2 [\lambda^e v_2 +  \cdots + 
\binom{e}{j-2} \lambda^{e-(j-2)} v_{j}] + \cdots + \alpha_j [\lambda^e v_j]
\end{align*}

where $S^{-1} X_0=(\alpha_1 , \dots , \alpha_{j})^\top$.
Therefore, we have
\begin{align*}
c_e &= Y \cdot X_{e-2}=   \\ &= (\alpha_1 \lambda^{e-2} ) Y\cdot v_1 + (\alpha_1 \binom{e-2}{1} \lambda^{e-3}) Y\cdot v_2 
+ \cdots + (\alpha_1 \binom{e-2}{j-1} \lambda^{e-2-(j-1)}) Y\cdot v_j +  \\ 
& +  (\alpha_2 \lambda^{e-2} ) Y\cdot v_2 +
+ \cdots + (\alpha_2 \binom{e-2}{j-2} \lambda^{e-2-(j-2)}) Y\cdot v_j +
\cdots + (\alpha_{j}  \lambda^{e-2}) Y\cdot  v_{j}.
\end{align*}

The  $\mathcal{Z}$-transform of the sequence $\{{e \choose i}\lambda^{e-i}\}_{e\geq 0}$ is 
$$\mathscr Z[{e \choose i}\lambda^{e-i}](Z)= \frac{Z}{(Z-\lambda)^{i+1}} \hskip 3mm , \hskip 4mm  
\mathscr Z[{e \choose i}\lambda^{e-i}](\frac{1}{T})=\frac{T^{i}}{(1-\lambda T)^{i+1}}.$$
Thus we have

\begin{align*}
\sum_{e\geq 2} c_e T^{e-2} &=
\frac{\alpha_1 (Y\cdot v_1)  }{1-\lambda T} + \frac{\alpha_{1} (Y\cdot v_{2})  T }{(1-\lambda T)^2}+ \cdots + 
\frac{\alpha_{1} (Y\cdot v_{j})  T^{j-1} }{(1-\lambda T)^j} + \\ &+
\frac{\alpha_2 (Y\cdot v_2)  }{1-\lambda T} +  \cdots + 
\frac{\alpha_{2} (Y\cdot v_{j})  T^{j-2} }{(1-\lambda T)^{j-1}} +  \cdots +  \frac{\alpha_j (Y\cdot v_j)  }{1-\lambda T} = 
\\ &= \frac{P_\lambda(T)}{(1-\lambda T)^j}, 
\end{align*}
where $ P_\lambda (T) $ is a polynomial with $\deg(P_\lambda (T) )< j$.

\vskip 2mm
In the general case we will get a rational function of the form $\frac{P_\lambda(T)}{(1-\lambda T)^j}$ for each Jordan block
of size $j$. Since the maximum size of the Jordan blocks associated to an eigenvalue
is the algebraic multiplicity of this eigenvalue as a root of the minimal polynomial we get 
$$\mathcal{G}_A(T)= c_0+c_1 T+ T^2\sum_{e\geq 2} c_e T^{e-2}= c_0+c_1 T+ T^2\frac{P(T)}{(1-\lambda_1 T)^{j_1}\cdots (1-\lambda_{s}T)^{j_s}}$$
with $\deg(P (T) )< j_1+ \cdots + j_s $ and the result follows from Proposition \ref{rational}

\end{proof}

\begin{remark}
Numerical experimentation with  {\tt MATLAB} \cite{mat}   suggests that all the eigenvalues of the matrix $U$ are different (albeit we may
have complex eigenvalues). Thus the matrix $U$ diagonalizes and the characteristic and minimal polynomial coincide.  
In particular, the linear recurrence that the complexity sequence satisfies is given by the characteristic polynomial.
\end{remark}

\begin{example} Consider the ring $A=T( V_1 (R[x_1,\dots , x_{18}]))$, where $R$ is a ring of characteristic $p=31$.
Then, the set of eigenvalues of the corresponding matrix $U$ is
 
 \begin{verbatim}
  Z =  1.0e+25 *

  2.255011677416268 + 0.000000000000000i
  0.072742312174691 + 0.000000000000000i
  0.002346526199128 + 0.000000000000000i
  0.000075694393453 + 0.000000000000000i
  0.000002441754572 + 0.000000000000000i
  0.000000078766244 + 0.000000000000000i
  0.000000002540832 + 0.000000000000000i
  0.000000000081958 + 0.000000000000000i
  0.000000000002643 + 0.000000000000000i
  0.000000000000085 + 0.000000000000000i
  0.000000000000003 + 0.000000000000000i
  0.000000000000000 + 0.000000000000000i
 -0.000000000000000 + 0.000000000000000i
 -0.000000000000000 - 0.000000000000000i
  0.000000000000000 + 0.000000000000000i
  0.000000000000000 + 0.000000000000000i
   \end{verbatim}
   
 If we take a close look we can check that there exist complex eigenvalues:

   \begin{verbatim}
>> Z(13)   ans = -2.634600279958723e+08 + 2.351626510499541e+08i
>> Z(14)   ans = -2.634600279958723e+08 - 2.351626510499541e+08i
 \end{verbatim}

\end{example}

All the entries of the matrix $U$ are positive real numbers. Thus, the Perron-Frobenius theorem says that it has a 
unique eigenvalue with maximal absolute value that we simply denote as Perron-Frobenius eigenvalue. 
Using Theorem \ref{pole} we can deduce the complexity and the Frobenius
complexity of the Frobenius algebra. This was already observed by F.~Enescu and Y.~Yao in \cite{EY15}.

\begin{corollary} \label{perron}
Let $\lambda$ be the Perron-Frobenius eigenvalue of the matrix $U$ associated to the twisted ring $A=T( V_r (R[x_1,\dots , x_m]))$.
Then $\cx(A)=\lambda.$

\end{corollary}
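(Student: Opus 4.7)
The plan is to combine Theorem \ref{recurrence} with Corollary \ref{linear_cx} and the classical Perron--Frobenius theorem. First, Theorem \ref{recurrence} tells us that for $e\ge 2$ the complexity sequence $\{c_e(A)\}$ satisfies the linear recurrence whose characteristic polynomial (in the sense of Corollary \ref{linear_cx}) is the minimal polynomial $m(T)$ of the matrix $U$. Because $\cx(A)$ depends only on the asymptotic behaviour of $\{c_e(A)\}$, the fact that the recurrence kicks in at $e=2$ rather than $e=0$ is immaterial; Corollary \ref{linear_cx} still identifies $\cx(A)$ with $|\mu|$, where $\mu$ is a root of $m(T)$ of maximum absolute value.

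Second, the minimal polynomial and the characteristic polynomial of $U$ share the same set of roots, differing only in the multiplicities of those roots, so the roots of $m(T)$ coincide with the distinct eigenvalues of $U$. Hence $\cx(A)$ equals the spectral radius of $U$, that is, the maximum of $|\mu|$ over all eigenvalues $\mu$ of $U$.

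Third, the remark immediately preceding the corollary records that every entry of $U$ is a positive real number. The Perron--Frobenius theorem then furnishes a unique eigenvalue $\lambda$ of $U$ of maximum absolute value, and asserts that this $\lambda$ is a positive real number. Combining with the previous step yields $\cx(A)=|\lambda|=\lambda$, which is the desired conclusion.

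The argument is essentially formal once Theorem \ref{recurrence} is in hand; the only step that requires external input is the invocation of Perron--Frobenius, whose hypothesis (positivity of the entries of $U$) is already part of the setup of the corollary. There is therefore no substantial obstacle; the main point is simply to translate the output of Theorem \ref{recurrence} from the language of minimal polynomials back into the language of eigenvalues, where the Perron--Frobenius theorem can be applied directly.
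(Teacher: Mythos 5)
Your proposal is correct and follows essentially the same route as the paper: the paper also invokes Theorem \ref{recurrence} to get the linear recurrence governed by the minimal polynomial of $U$, notes that the entries of $U$ are positive so Perron--Frobenius gives a unique eigenvalue of maximal modulus (necessarily a positive real), and reads off $\cx(A)$ via Theorem \ref{pole} (the paper phrases this in terms of the pole of the generating function, whereas you cite Corollary \ref{linear_cx}, but these are the same statement in different clothing). The only small addition you make is to point out explicitly that the minimal and characteristic polynomials of $U$ share the same set of roots, which the paper leaves implicit; that is a correct and harmless clarification.
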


\vskip 2mm

We end this section with some computations for the twisted Veronese subring $A=T( V_r (R[x_1,\dots , x_{m}]))$ with 
$R$ being of characteristic $p$. F.~Enescu and Y.~Yao computed explicitly the case $(m,r,p)=(4,1,2)$ in \cite{EY16} 
and $(m,r,p)=(5,2,3)$ in \cite{EY15}. In the following tables we present,  for some small cases, the minimal polynomial $m(T)$, the complexity
and the Frobenius complexity of $A$ varying the characteristic of the ring. We point out that
$\lim_{p\to \infty} \cx_F(A)=m-1$ (see \cite[Theorem 4.1]{EY15}).

\vskip 2mm

\def\arraystretch{1.3}
\begin{table}[!ht] \label{table:T2}
\centering
\begin{tabular}{|c|c|c|c|}
\hline
\(p\) & \( m(T)\) & \(\cx(A) \) & \(\cx_F(A) \)\\[1.5pt] \hline
\hline
\(2\) & \(T^3 - 25T^2 + 165T - 280\) & \( 15.5436 \)  & \( 3.9583 \) \\[1.5pt] \hline
\(3\) & \(T^3 - 105T^2 + 2205T - 8505\) & \( 78.1909 \)  & \( 3.9679 \) \\[1.5pt] \hline
\(5\) & \(T^3 - 710T^2 + 66625T - 687500\) & \( 601.0565 \)  & \( 3.9757 \) \\[1.5pt] \hline
\(7\) & \(T^3 - 2590T^2 + 660275T - 12941390\) & \( 2306.1190 \)  & \( 3.9793 \) \\[1.5pt] \hline
\(11\) & \(T^3 - 15103T^2 + 14857953T - 692680351\) & \( 14048.9228 \)  & \( 3.9828 \) \\[1.5pt] \hline
\(13\) & \(T^3 - 29120T^2 + 47246485T - 3040889670\) & \( 27399.7078 \)  & \( 3.9838 \) \\[1.5pt] \hline
\end{tabular}
\vspace{5pt}
\caption{ The case $(m,r)=(5,1)$ .}
\end{table}

\def\arraystretch{1.3}
\begin{table}[!ht] \label{table:T2}
\centering
\begin{tabular}{|c|c|c|c|}
\hline
\(p\) & \( m(T)\) & \(\cx(A) \) & \(\cx_F(A) \)\\[1.5pt] \hline
\hline
\(2\) & \(T^2 - 15T + 40\) & \( 11.5311 \)  & \( 3.5275 \) \\[1.5pt] \hline
\(3\) & \(T^2 - 60T + 420\) & \( 51.9089 \)  & \( 3.5950 \) \\[1.5pt] \hline
\(5\) & \(T^2 - 390T + 9625\) & \( 363.5230 \)  & \( 3.6633 \) \\[1.5pt] \hline
\(7\) & \(T^2 - 1400T + 82320\) & \( 1338.4982 \)  & \( 3.6997 \) \\[1.5pt] \hline
\(11\) & \(T^2 - 8052T + 1591876\) & \( 7849.1923 \)  & \( 3.7400 \) \\[1.5pt] \hline
\(13\) & \(T^2 - 15470T + 4844385\) & \( 15150.2437 \)  & \( 3.7528 \) \\[1.5pt] \hline
\end{tabular}
\vspace{5pt}
\caption{ The case $(m,r)=(5,2)$ .}
\end{table}

\vskip 2mm

In order to describe the generating function we only have to follow the arguments developed in the proof of Theorem \ref{recurrence}.
For example, in the case that $(m,r)=(5,1)$ and $p=5$ we have:

$$ \cG_A(T)= 1 +70 T + T^2 \left( \frac{15575 - 2913750 T +38359375 T^2}{1 - 710T + 66625T^2 - 687500T^3}  \right)$$

\section{Open questions}

Let $R$ be a commutative Noetherian ring and let $A= \oplus_{e\geq 0} A_e$ be a 
(non-necessarily commutative) $\mathbb{N}$-graded ring. When dealing with the generating 
function of $A$  and its relation with the complexity, there are some questions that immediately come to mind. 
The obvious one is:

\begin{itemize}
 \item When is the generating function of $A$ a rational function?
 \end{itemize}
 
 \noindent In this case, as a consequence of Theorem \ref{pole}, we have that the complexity of $A$ is finite.
 A priori, rationality of the generating function seems to be a stronger condition that involves the 
 structure of the complexity sequence  rather than its asymptotic behaviour. Then we may ask: 
 
\begin{itemize} 
 \item Is there any  $\mathbb{N}$-graded ring $A$ with finite complexity but non-rational generating function?
\end{itemize}

Assume that the generating sequence of $A$ is rational, then we may consider a generalization of the complexity $\cx(A)$
just taking into account all the eigenvalues of the characteristic polynomial associated to the corresponding linear recurrence ordered
by their absolute values. Namely,  we may consider the {\it complexity eigenvalues} of $A$
$$\cx_{\rm eig}(A)=(|\lambda_1|, |\lambda_2|, \dots , |\lambda_k|),$$ 
where the largest eigenvalue corresponds to the complexity (see Corollary \ref{linear_cx}).
In the case that $R$ is a local complete commutative ring of positive characteristic $p>0$, we may also
consider the {\it Frobenius complexity eigenvalues}
$$\cx_{F,\rm eig}(R):= \log_p(\cx_{\rm eig}(\cF(E_R))$$
Then we may ask:
\begin{itemize} 
 \item Does $\lim_{p\to \infty} \cx_{F,\rm eig}(R)$ exist?
 \item What is this limit for the twisted Veronese subring $A=T( V_r (R[x_1,\dots , x_{m}]))$ ?
\end{itemize}

\noindent Numerical experimentation with the twisted Veronese subring suggested  the following question:
\begin{itemize} 
 \item Are all the eigenvalues of the matrix $U$ different so $U$ diagonalizes?
\end{itemize}
Finally, it would be really useful to have a larger set of examples, not only  new examples of Frobenius algebras but other possible
interesting objects such as rings of differential operators over non-regular rings in any characteristic.



\begin{thebibliography}{23}

\bibitem{ABZ12}
 J.~\`Alvarez Montaner, A.~F.~Boix and S.~Zarzuela,
\newblock{\it Frobenius and Cartier algebras of Stanley-Reisner rings},
\newblock{J. Algebra} {\bf 358} (2012), 162--177.



\bibitem{Bli01}
 M.~Blickle,
\newblock{\it The intersection homology D-module in  finite characteristic},
\newblock{Ph.D. thesis}, (2001), arXiv:0110244.

\bibitem{Bli13}
 M.~Blickle,
\newblock{\it Test ideals via algebras of $p^e$-linear maps},
\newblock{J. Algebraic Geom.}, {\bf 22} (2013), 49--83.


\bibitem{BZ16}
A.~F.~Boix and S.~Zarzuela,
\newblock{\it Frobenius and Cartier algebras of Stanley-Reisner rings (II)}, 
 Acta Math Vietnam (2019). https://doi.org/10.1007/s40306-018-00314-1.

\bibitem{Ela}
S.~N.~Elaydi, 
\newblock{\it An introduction to difference equations.} Undergraduate Texts in Mathematics. Springer-Verlag, New York, 1996. xiv+389 pp. 

\bibitem{EY16}
 F.~Enescu and Y.~Yao,
\newblock{\it The Frobenius complexity of a local ring of prime characteristic},
\newblock{J. Algebra} {\bf 459} (2016), 133--156.

\bibitem{EY15}
 F.~Enescu and Y.~Yao,
\newblock{\it On the Frobenius complexity of determinantal ideals},
\newblock{J. Pure Appl. Algebra} {\bf 222} (2018),  414--432.
 

\bibitem{Fed83}
R.~Fedder. 
\newblock{\it $F$-purity and rational singularity},  
\newblock{Trans. Amer. Math. Soc.}, {\bf 278} (1983), 461--480. 

\bibitem{HH90}
M.~Hochster and C.~Huneke. 
\newblock{\it  Tight closure, invariant theory, and the Brian\c{c}on-Skoda theorem}, 
\newblock{J. Amer. Math. Soc.}  {\bf 3} (1990), 31--116.

\bibitem{Kat08}
M.~Katzman,
\newblock{\it Parameter-test ideals of Cohen-Macaulay rings,}
\newblock{Compos. Math. } {\bf 144} (2008), 933--948.

\bibitem{Kat10}
M.~Katzman,
\newblock{\it A non-finitely generated algebra of Frobenius maps,}
\newblock{Proc. Amer. Math. Soc.} {\bf 138} (2010), 2381--2383.

\bibitem{KSSZ}
M.~Katzman, K.~Schwede, A.~Singh and W.~Zhang,
\newblock{\it Rings of Frobenius operators,}
\newblock{Math. Proc. Cambridge Philos. Soc.} {\bf 157} (2014), 151--167.


\bibitem{LS01}
G.~Lyubeznik and K.~Smith,
\newblock{\it On the commutation of the test ideal with localization and completion,}
\newblock{Trans. Amer. Math. Soc.} {\bf 353} (2001), 3149--3180.

\bibitem{mat}
{\tt MATLAB} version 8.5.0. Natick, Massachusetts: The MathWorks Inc., 2015.

\bibitem{Pag17}
 J.~Page,
\newblock{\it The Frobenius complexity of Hibi rings},
 J. Pure Appl. Algebra {\bf 223} (2019), 580--604

\bibitem{Sch11}
K.~Schwede,
\newblock{\it Test ideals in non-$\mathbb{Q}$-Gorenstein rings}, 
\newblock{Trans. Amer. Math. Soc.} {\bf 363} (2011), 5925--5941.


\end{thebibliography}
\end{document}